
\documentclass[preprint, 12pt, english]{elsarticle}
\usepackage{amsmath}
\usepackage{latexsym, amssymb}
\usepackage{amsthm}
\usepackage{txfonts}

\newtheorem{thm}{Theorem}[section] 

\newtheorem{cor}[thm]{Corollary}

\newtheorem{lem}[thm]{Lemma}
\newtheorem{prop}[thm]{Proposition}

\newcommand\operA[2]{{\if!#2!\operatorname{#1}\else{\operatorname{#1}_{#2}^{\phantom{I}}}\fi}} 
\newcommand\set[1]{\{#1\}}

%
%
\newcommand\Cref[1]{{Corollary~\ref{#1}}}%

\def\norm{{\operatorname{N}}}


\newcommand{\Trace}[1][]{\if!#1!\operatorname{Tr}\else{\operatorname{Tr}_{#1}^{\phantom{I}}}\fi} 

\long\def\forget#1\forgotten{{}} %

\def\({\left(}
\def\){\right)}

\newif\iffurther
\furtherfalse

\newif\ifXY 
\XYtrue     
%
\ifXY

\input xy
\input xyidioms.tex
\usepackage{xy}
\xyoption{all} %
\fi 

\usepackage{babel}

\journal{??}

\begin{document}

\begin{frontmatter}

\title{Kummer Subspaces of Tensor Products of Cyclic Algebras}

\author{Adam Chapman\corref{ch}}
\ead{adam1chapman@yahoo.com}
\cortext[ch]{The author is supported by Wallonie-Bruxelles International.}
\address{ICTEAM Institute, Universit\'{e} Catholique de Louvain, B-1348 Louvain-la-Neuve, Belgium.}

\begin{abstract}
We discuss the Kummer subspaces of tensor products of cyclic algebras, focusing mainly on the case of cyclic algebras of degree 3.
We present a family of maximal spaces in the general case, classify all the monomial spaces in the case of tensor products of cyclic algebras of degree 3 using graph theory, and provide an upper bound for the dimension in the generic tensor product of cyclic algebras of degree 3.
\end{abstract}

\begin{keyword}
Central Simple Algebras, Cyclic Algebras, Kummer Spaces, Generic Algebras, Graphs
\MSC[2010] primary 16K20; secondary 05C38,16W60
\end{keyword}

\end{frontmatter}

\section{Introduction}

Let $p$ be a prime number and $F$ be an infinite field of characteristic not $p$ containing a primitive $p$th root of unity $\rho$.
A cyclic algebra of degree $p$ over $F$ is an algebra that can be presented as
$$F[x,y : x^p=\alpha, y^p=\beta, y x=\rho x y]$$
for some $\alpha,\beta \in F^\times$.
We denote the presentation as $(\alpha,\beta)_{p,F}$.
A given algebra can have more than one presentation.
Fixing a presentation, we call the elements $x^i y^j$ where $i$ and $j$ are integers between $0$ and $p-1$ ``monomials".
The same goes for tensor products of cyclic algebras, i.e. if we fix presentations $F[x_k,y_k: x_k^p=\alpha_k, y_k^p=\beta_k, y_k x_k=\rho x_k y_k]=(\alpha_k,\beta_k)_{p,F}$ then the monomials in the tensor product $\bigotimes_{k=1}^n (\alpha_k,\beta_k)_{p,F}$ are $\prod_{k=1}^n x_k^{i_k} y_k^{j_k}$.

Let $A$ be a division tensor product of $n$ cyclic algebras of degree $p$. An element $x \in A$ is called Kummer (or $p$-central) if $x^p \in F$ whereas $x^k \not \in F$ for every $1 \leq k \leq p-1$.
An $F$-vector subspace of $A$ is called Kummer if all its nonzero elements are Kummer.
A necessary and sufficient condition for a space $F x_1+\dots+F x_m$ to be Kummer is that $x_1^{d_1} * \dots *x_m^{d_m} \in F$ for every $m$-tuple of nonnegative integers $d_1,\dots,d_m$ satisfying $d_1+\dots+d_m=p$. The expression $x_1^{d_1} * \dots *x_m^{d_m}$ stands for the sum of all the words in which each $x_k$ appears $d_k$ times, e.g. $x_1^2 * x_2=x_1^2 x_2+x_1 x_2 x_1+x_2 x_1^2$. This notation was introduced in \cite{Revoy}.
Fixing the presentations of the cyclic algebras, a Kummer space is called ``monomial" if it is spanned by monomials.

The classification of Kummer spaces is an open problem. There has not even been found yet an upper bound for the dimension of such spaces, except for a few special cases. If $p=2$ and $n=1$ the space of elements of trace zero contains all the Kummer elements. In general it is known that for $p=2$ the size of the Kummer space is bounded from above by $2 n+1$.
The Kummer spaces in case of $p=n=2$ were studied in more detail in \cite{ChapVish2}.
The Kummer spaces in case of $p=3$ and $n=1$ where classified in \cite{Raczek} and \cite{MV1}.
So far the formula $p n+1$ for the upper bound of the dimension holds in all the known cases, and we conjecture it to be true in general.

In \cite{CGMRV} the monomial Kummer spaces in division cyclic algebras of prime degrees were classified.
Furthermore, an upper bound was provided for the dimension of the Kummer subspaces of the generic cyclic algebra, i.e. the algebra $(\alpha,\beta)_{p,K}$ where $K$ is the purely transcendental field extension of $F$ generated by $\alpha$ and $\beta$.

In Section \ref{maxSec} we present a family of maximal Kummer subspaces (with respect to inclusion) of tensor products of cyclic algebras of any degree. These spaces happen also to be monomial. This section is taken from \cite{ChapmanPHD} and is based in turn on a result from \cite{Chapmanthesis}.

In Section \ref{threesec} we classify the monomial Kummer subspaces of tensor products of cyclic algebras of degree 3.
For their description we make use of graph theory.
This section is based on results from \cite{ChapmanPHD}.

In Section \ref{generic} we provide an upper bound for the dimension of a Kummer subspace of the generic tensor product of cyclic algebras of degree 3.

\section{Maximal Kummer Subspaces of Tensor Products of Cyclic Algebras}\label{maxSec}

Fix $A=\bigotimes_{k=1}^n (\alpha_k,\beta_k)_{p,F}=\bigotimes_{k=1}^n F[x_k,y_k : x_k^p=\alpha_k, y_k^p=\beta_k, y_k x_k=\rho x_k y_k]$.

Let $V_0=F$ and $V_k=F[x_k] y_k+V_{k-1} x_k^{a_k}$ for any $1 < k \leq n$ and $a_k$ prime to $p$.
Assume that $v^p \in F$ for all $v \in V_{k-1}$ for a certain $k$.
Every element of $V_k$ is of the form $f(x_k) y_k+v x_k^{a_k}$ for some $f(x_1) \in F[x_1]$ and $v \in V_{k-1}$. Since $v$ commutes with $x_k$ and $y_k$, and $y_1 x_1=\rho x_1 y_1$, $(f(x_k) y_k+v x_k^{a_k})^p=(f(x_k) y_k)^p+v^p x_k^{p a_k}=\norm_{F[x_k]/F}(f(x_k)) \beta_k+v^p \alpha_k^{a_k} \in F$.
For any $1 \leq m \leq p-1$, if $f(x_k) \neq 0$ then the eigenvector of $(f(x_k) y_k+v x_k^{a_k})^m$ corresponding to the eigenvalue $\rho^m$ with respect to conjugation by $x$ is $(f(x_k) y_k)^m$, which is not zero, and therefore $(f(x_k) y_k+v x_k^{a_k})^m \not \in F$.
If $f(x_k)=0$ then what is left is $v x_k^{a_k}$, and of course $v^m x_1^{a_k m} \not \in F$.
Consequently, $V_k$ is Kummer.
Since $V_0=F$, by induction $V_k$ is Kummer for every $1 \leq k \leq n$. The dimension of each $V_k$ is $p k+1$.

\begin{thm}
For any $k \leq n$, $V_k$ is maximal with respect to inclusion.
\end{thm}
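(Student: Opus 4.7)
I aim to show that any $w \in A$ with $V_k + Fw$ Kummer must lie in $V_k$. The central tool is the $(\Z/p\Z)^n$-grading on $A$ by $y$-degree, $A = \bigoplus_{\mathbf{j}} A_{\mathbf{j}}$ with $A_{\mathbf{j}} = F[x_1,\dots,x_n] y_1^{j_1} \cdots y_n^{j_n}$. This grading is preserved by inner conjugation by each $x_i$ (acting as multiplication by $\rho^{-j_i}$ on $A_{\mathbf{j}}$) and is compatible with the Kummer star product: the star product of homogeneous elements is homogeneous, with weight equal to the sum of weights. Unfolding the recursive definition of $V_k$ shows that $V_k$ is itself graded, with support $W := \{\mathbf{0}, e_1,\dots,e_k\}$ (where $e_j$ is the $j$-th standard basis vector): its $e_j$-component is $F[x_j] y_j \cdot U_j$ for $U_j := \prod_{i=j+1}^{k} x_i^{a_i}$ (with $U_k = 1$), and its $\mathbf{0}$-component is the line $F \cdot U_0$ with $U_0 := \prod_{i=1}^{k} x_i^{a_i}$. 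Write $w = \sum_{\mathbf{j}} w_{\mathbf{j}}$ accordingly.

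\textbf{Step 1 (eliminate bad weights).} First I show $w_{\mathbf{j}} = 0$ for every $\mathbf{j} \notin W$. For every $v \in V_k$ and $\lambda \in F$, expanding $(v + \lambda w)^p \in F$ in powers of $\lambda$ and using $|F| = \infty$ yields $v^{p-1} * w \in F$. For $v$ homogeneous of weight $\mathbf{i} \in W$, each piece $v^{p-1} * w_{\mathbf{j}}$ lies in $A_{(p-1)\mathbf{i} + \mathbf{j}} = A_{\mathbf{j} - \mathbf{i}}$. For $\mathbf{j} \notin W$ and $\mathbf{i} \in W$ we have $\mathbf{j} \ne \mathbf{i}$, so $\mathbf{j} - \mathbf{i} \ne \mathbf{0}$ and hence $v^{p-1} * w_{\mathbf{j}} = 0$. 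Letting $v$ range over the $p$-dimensional slice $F[x_j] y_j U_j \subset V_k$ (for each $j = 1,\dots,k$) and over the line $F U_0$ produces a system of linear equations on the polynomial coefficients of $w_{\mathbf{j}}$; a Vandermonde-type argument based on $\rho$ being a primitive $p$-th root of unity forces $w_{\mathbf{j}} = 0$.

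\textbf{Step 2 (identify good weights).} After Step 1, $w = w_{\mathbf{0}} + \sum_{j=1}^{k} w_{e_j}$, and I need $w_{e_j} \in F[x_j] y_j U_j$ and $w_{\mathbf{0}} \in F U_0$. The same star-product analysis, now with $v$ of weight $e_{j'}$ for $j' \ne j$, forces $v^{p-1} * w_{e_j}$ into the nonzero weight $e_j - e_{j'}$, hence to vanish; this pins the $x_{j'}$-dependence of $w_{e_j}$ to the monomial $x_{j'}^{a_{j'}}$ appearing in $U_j$. To pin down the $x_j$-dependence (untouched by the weight argument), I would use higher star products $v^{p-a} * w_{e_j}^a$ for $a \ge 2$, with $v = f(x_j) y_j U_j$ ranging over $f \in F[T]$, extracting the further constraints that force $w_{e_j} \in F[x_j] y_j U_j$. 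A parallel argument pins down $w_{\mathbf{0}}$.

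\textbf{Main obstacle.} The hard part is the Vandermonde-type linear-algebra argument in Step 1 (and its refinement in Step 2): showing that as $v$ varies through enough elements of $V_k$, the collective constraints $v^{p-1} * w_{\mathbf{j}} = 0$ (together with higher star products when needed) force $w_{\mathbf{j}}$ into the desired subspace. The bookkeeping requires careful tracking of the cocycle factors coming from the relations $y_i x_i = \rho x_i y_i$ and verifying a nondegeneracy statement across all the weight slices of $V_k$ simultaneously; this is where the bulk of the computational effort lies.
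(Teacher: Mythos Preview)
Your overall plan---extract constraints from $v^{p-1}*w\in F$ and organize them by a grading---is the same starting point as the paper, but the paper's execution is drastically simpler and your Step~2 contains a genuine gap.

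The paper does not grade by $y$-degree alone; it works monomial by monomial. After subtracting an element of $V_k$ so that $w^{p-1}*z=0$ for every $w$ in the standard basis $B$, one uses the elementary identity that for a monomial $w$ and any monomial $t$ one has $w^{p-1}*t=0$ if $wt\neq tw$ and $w^{p-1}*t=p\,w^{p-1}t\neq 0$ if $wt=tw$. Thus $w^{p-1}*z=0$ kills precisely the coefficients of monomials commuting with $w$. The whole proof then reduces to a two-line check: every monomial $t=x_1^{c_1}y_1^{e_1}\cdots x_n^{c_n}y_n^{e_n}$ commutes with \emph{some} $w\in B$---with $U_0$ if all $e_i$ vanish (for $i\le k$), and otherwise with $x_i^{r}y_iU_i$ where $i$ is the largest index $\le k$ with $e_i\neq 0$ and $r\equiv c_ie_i^{-1}\pmod p$. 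No Vandermonde system, no higher star products.

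Your Step~2, by contrast, does not work as written. You claim that letting $v$ range over the $p$-dimensional slice of weight $e_{j'}$ (with $j'\neq j$) pins down the $x_{j'}$-dependence of $w_{e_j}$. But compute the commutation phase: for $v=x_{j'}^{\,r}y_{j'}U_{j'}$ and a monomial $t=\bigl(\prod_i x_i^{c_i}\bigr)y_j$ of $y$-degree $e_j$, one finds $vt=\rho^{\phi}tv$ with $\phi=c_{j'}-a_j\cdot[j'<j]$, which is \emph{independent of $r$}. So all $p$ choices of $r$ impose the \emph{same} single condition (that one particular value of $c_{j'}$ does not occur), far from enough to force $c_{j'}$ to a unique value. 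Moreover there are no $v$'s of weight $e_{j'}$ for $j'>k$, so you get no constraint on those variables at all from this mechanism. Finally, your plan to use higher star products to ``pin down the $x_j$-dependence'' is aimed at a non-issue: the $e_j$-component of $V_k$ is all of $F[x_j]\,y_j\,U_j$, so the $x_j$-dependence is free and needs no pinning. The information you are missing comes precisely from $v$ of the \emph{same} weight $e_j$: for such $v$ the phase $\phi=c_j-r$ \emph{does} depend on $r$, and (after the normalization the paper performs) varying $r$ sweeps through all monomials of $y$-degree $e_j$. That is exactly the paper's move.
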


\begin{proof}
Let $V=V_k$. $V$ has a standard basis $$B=\set{x_i^j y_i x_{i+1} \dots x_k : 1 \leq i \leq k, 0 \leq j \leq p-1} \cup \set{x_1 x_2 \dots x_k}.$$
Let $z$ be a nonzero element in the algebra $A$.
This element can be expressed as a linear combination of the monomials $x_1^{c_1} y_1^{e_1}\dots x_n^{c_n} y_n^{e_n}$.
Let us assume negatively that $V+F z$ is $p$-central.
Consequently, $w^{p-1} * z \in F$ for every $w \in B$.
Since we can subtract from $z$ the appropriate linear combination of the elements of $B$, we can assume that $w^{p-1} * z=0$ for every $w \in B$.

Let us pick one monomial $t=x_1^{c_1} y_1^{e_1}\dots x_n^{c_n} y_n^{e_n}$.

If $e_1=e_2=\dots=e_n=0$ then $t$ commutes with $x_1 x_2 \dots x_k \in V$. Since $(x_1 x_2 \dots x_k)^{p-1} * z=0$, the coefficient of $t$ in $z$ is zero.

Otherwise, let $i$ be the maximal integer for which $e_i \neq 0$.
The monomial $t$ commutes with the element $x_i^r y_i x_{i+1} \dots x_k \in V$ where $r \equiv c_i e_i^{-1} \pmod{p}$.
Since $(x_i^r y_i x_{i+1} \dots x_k)^{p-1} * z=0$, the coefficient of $t$ in $z$ is zero.

Since it holds for any monomial $t$, $z=0$, and that is a contradiction.
\end{proof}

\section{Monomial Kummer subspaces of Tensor Products of Cyclic Algebras of Degree 3}\label{threesec}

Keep $A$ as before and assume $p=3$.
Let $\mathcal{X}$ be the set of all Kummer elements in $A$.
We build a directed graph $(\mathcal{X},E)$ by drawing an edge from $y$ to $x$
$$\xymatrix@C=20px@R=20px{y \ar@{->}[r]  & x}$$ if $y x y^{-1}=\rho x$.
For any subset $B \subset \mathcal{X}$, $(B,E_B)$ is the subgraph obtained by taking the vertices in $B$ and all the edges between them.
The set $B$ is called $\rho$-commuting if it is linearly independent over $F$ and for every two distinct elements $x,y \in B$, $y x y^{-1}=\rho^k k$ for $k \in \set{0,1,2}$.
In particular, any set of monomials is $\rho$-commuting.

According to \cite[Corollary 2.2]{ChapVish1}, a set $\set{x_1,\dots,x_m}$ spans a Kummer space if and only if every subset of cardinality three $\set{x_i,x_j,x_k}$ spans a Kummer space.
Therefore we will start with the set of cardinality 3.

\begin{lem}\label{size3}
Given a $\rho$-commuting set $\set{x,y,z}$,  $F x+F y+F z$ is Kummer if and only if either
\begin{equation*}
\xymatrix@R=12pt@C=18pt{
{x}\ar@{->}|(0.5){}[r]\ar@{->}|(0.5){}[d] & {y}\\
{z}\ar@{->}|(0.5){}[ru] & {}
}
\end{equation*}
or $x y z \in F$, in which case
\begin{equation*}
\xymatrix@R=12pt@C=18pt{
{x}\ar@{<-}|(0.5){}[r]\ar@{->}|(0.5){}[d] & {y}\\
{z}\ar@{->}|(0.5){}[ru] & {}
}
\end{equation*}
\end{lem}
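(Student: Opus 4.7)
The plan is to reduce the Kummer condition on $Fx+Fy+Fz$ to a short list of $*$-product conditions and analyse them via the $\rho$-commutation relations. Writing $yx=\rho^a xy$, $zx=\rho^b xz$, $zy=\rho^c yz$ with $a,b,c\in\{0,1,2\}$, the subspace is Kummer if and only if $x^3,y^3,z^3\in F$ (already assumed), each of the six binomial sums $x^2*y,x*y^2,x^2*z,x*z^2,y^2*z,y*z^2$ lies in $F$, and the trinomial sum $x*y*z$ lies in $F$.

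The first step is to rule out commuting pairs. A direct calculation gives $x^2*y=(1+\rho^a+\rho^{2a})x^2y$, which vanishes when $a\in\{1,2\}$ but equals $3x^2y$ when $a=0$. In the commuting case the Kummer condition forces $x^2y\in F$; combined with $x^3\in F$ this yields $y\in Fx$, contradicting the linear independence of $\{x,y,z\}$. Hence $a,b,c\in\{1,2\}$, the induced directed subgraph on $\{x,y,z\}$ is a tournament, and the six binomial $*$-products vanish automatically.

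The second step is to reduce $x*y*z$ to a scalar multiple of $xyz$. Rewriting each of the six orderings of $xyz$ as a power of $\rho$ times $xyz$ via the commutation rules, one obtains
\[
x*y*z=\lambda\cdot xyz,\qquad \lambda=1+\rho^a+\rho^c+\rho^{a+b}+\rho^{b+c}+\rho^{a+b+c}.
\]
Since $A$ is a division algebra, $xyz\neq 0$, and so $x*y*z\in F$ holds if and only if either $\lambda=0$, or else $\lambda\neq 0$ and $xyz\in F$.

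The final step is a finite check of the eight triples $(a,b,c)\in\{1,2\}^3$. Using the identity $1+\rho+\rho^2=0$, one sees that $\lambda=0$ for the six transitive tournaments on $\{x,y,z\}$ (the configurations with a unique source, a unique sink, and an intermediate vertex), and that $\lambda\neq 0$ precisely for $(a,b,c)\in\{(1,2,1),(2,1,2)\}$, which are the two orientations of a directed $3$-cycle. The first diagram in the statement is one of the six transitive tournaments and the second is one of the two $3$-cycles, so the two cases of the lemma correspond exactly to the two cases of this classification. I expect the main obstacle to be the trinomial case analysis, but once $\lambda$ is written down explicitly it reduces to a handful of elementary evaluations.
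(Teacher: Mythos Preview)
Your proof is correct and follows essentially the same approach as the paper: first rule out commuting pairs via $x^2*y=3x^2y\in F\Rightarrow y\in Fx$, then reduce the remaining condition to whether $x*y*z$ lies in $F$. The paper simply asserts that $x*y*z=0$ in the transitive case and $x*y*z=-3\rho^{-1}xyz$ in the $3$-cycle case, whereas you parametrize by $(a,b,c)$ and compute the scalar $\lambda$ explicitly for all eight tournaments; this is more detailed but not a different method.
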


\begin{proof}
If $x y=y x$ then $x^2*y=3 x^2 y \in F$ which means that $y \in F x$, contradiction.
Consequently we are left with the two graphs above (up to a change in the order of the elements).
In the first case, $x * y * z=0$, so there are no extra conditions.
In the second case, $x * y * z=-3 \rho^{-1} x y z \in F$.
The opposite direction is a straight-forward computation.
\end{proof}

Let $B$ be a $\rho$-commuting set spanning a Kummer space. We will now study the properties of the directed graph $(B,E_B)$.
By a cycle we always mean a \textbf{simple directed cycle}.

\begin{prop}\label{direction}
If $(B,E_B)$ contains a cycle of length $3$
\begin{equation*}
\xymatrix@R=12pt@C=18pt{
{x_0}\ar@{<-}|(0.5){}[r]\ar@{->}|(0.5){}[d] & {x_1}\\
{x_2}\ar@{->}|(0.5){}[ru] & {}
}
\end{equation*}
then for every $y \in B \setminus \set{x_0,x_1,x_2}$, either $\xymatrix@C=20px@R=20px{x_k \ar@{->}[r]  & y}$ for any $k \in \set{0,1,2}$ or $\xymatrix@C=20px@R=20px{x_k \ar@{<-}[r]  & y}$ for any $k \in \set{0,1,2}$.
\end{prop}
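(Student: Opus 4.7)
The plan is to apply \Lref{size3} to the cycle triple to extract a central product, then use conjugation by $y$ to constrain its edges to the $x_k$.

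By \Lref{size3} applied to $\{x_0,x_1,x_2\}$, the cycle configuration puts us in the second case of the lemma, which gives $x_0 x_1 x_2 \in F$. This central element is the key input. Next, fix $y \in B \setminus \{x_0,x_1,x_2\}$. For each $i$ and any $j \neq i$ in $\{0,1,2\}$, the triple $\{y, x_i, x_j\}$ is $\rho$-commuting and spans a Kummer space by \cite[Corollary 2.2]{ChapVish1}, so by \Lref{size3} it matches one of the two configurations; in both, every pair of vertices has an edge. Hence $y$ and $x_i$ do not commute, and since $B$ is $\rho$-commuting, $y x_i y^{-1} = \rho^{\epsilon_i} x_i$ for some $\epsilon_i \in \{1,2\}$, with $\epsilon_i = 1$ iff $y \to x_i$ and $\epsilon_i = 2$ iff $x_i \to y$.

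Conjugating the central product by $y$,
$$x_0 x_1 x_2 \;=\; y(x_0 x_1 x_2) y^{-1} \;=\; \rho^{\epsilon_0+\epsilon_1+\epsilon_2}\, x_0 x_1 x_2,$$
so $\epsilon_0+\epsilon_1+\epsilon_2 \equiv 0 \pmod 3$. With each $\epsilon_i \in \{1,2\}$, the only solutions are $(1,1,1)$ and $(2,2,2)$, corresponding respectively to $y \to x_k$ for all $k$ and $x_k \to y$ for all $k$. The only nontrivial step is extracting $x_0 x_1 x_2 \in F$ from the cycle via \Lref{size3}; after that, the arithmetic modulo $3$ closes the argument.
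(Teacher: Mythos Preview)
Your proof is correct and takes a genuinely different route from the paper. Both arguments begin the same way, extracting $x_0 x_1 x_2 \in F$ from the cycle via \Lref{size3}. From there the paper argues by contradiction: it assumes two of the edges between $y$ and the $x_k$ point in opposite directions, which produces a second $3$-cycle (say on $x_0, x_1, y$); applying \Lref{size3} again yields $y x_0 x_1 \in F$, and comparing this with $x_0 x_1 x_2 \in F$ forces $y \in F x_2$, contradicting linear independence. The remaining orientations are handled by symmetry. Your argument instead conjugates the central element $x_0 x_1 x_2$ by $y$ and reads off $\epsilon_0+\epsilon_1+\epsilon_2 \equiv 0 \pmod 3$, which with $\epsilon_i \in \{1,2\}$ leaves only $(1,1,1)$ or $(2,2,2)$. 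This is cleaner: it avoids case analysis and a second invocation of \Lref{size3}, and it makes transparent why $p=3$ is what drives the conclusion. The paper's approach, on the other hand, stays entirely within the graph-and-product language used in the surrounding propositions, and its ``compare two central products to force a linear dependence'' trick is exactly the mechanism reused in the next proposition on vertex-disjointness of cycles.
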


\begin{proof}
If $\xymatrix@C=20px@R=20px{x_0 \ar@{->}[r]  & y}$ and $\xymatrix@C=20px@R=20px{x_1 \ar@{<-}[r]  & y}$ then
\begin{equation*}
\xymatrix@R=12pt@C=18pt{
{x_0}\ar@{<-}|(0.5){}[r]\ar@{->}|(0.5){}[d] & {x_1}\\
{y}\ar@{->}|(0.5){}[ru] & {}
}
\end{equation*}
which means that $y x_0 x_1 \in F$. Since $x_0 x_1 x_2 \in F$, we get $y \in F x_2$, which contradicts the linear independence.
The rest of the proof repeats the same idea.
\end{proof}

\begin{prop}
The cycles of $(B,E_B)$ are vertex-disjoint.
\end{prop}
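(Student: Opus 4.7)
The plan is to prove the statement in two stages, both driven by \Pref{direction}: first, show that no two distinct $3$-cycles in $(B,E_B)$ share a vertex; second, show that no cycle of length at least $4$ exists. Together these yield vertex-disjointness of all cycles. For stage one, suppose distinct $3$-cycles $T_1$ and $T_2$ share a vertex. Since an edge $y\to x$ means $yxy^{-1}=\rho x$, which is incompatible with $x\to y$ (inverting gives $xyx^{-1}=\rho^{-1}y$, not $\rho y$), the graph has no anti-parallel edges, and any shared edge has a fixed orientation. If $|T_1\cap T_2|=2$, the shared edge forces the structure of both cycles and puts the third vertex of $T_2$ in a position with one in-edge and one out-edge relative to $T_1$, contradicting \Pref{direction}. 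If $|T_1\cap T_2|=1$, with common vertex $a$, $T_1=a\to b\to c\to a$, and $T_2=a\to d\to e\to a$, then \Pref{direction} applied to $T_1$ forces $d$ all-in and $e$ all-out (from $a\to d$ and $e\to a$), producing in particular $b\to d$ and $e\to b$. Then $b$ carries the mixed configuration $a\to b$, $b\to d$, $e\to b$ relative to $T_2$, contradicting \Pref{direction} again.

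For stage two, take a minimum-length cycle $y_1\to y_2\to\cdots\to y_n\to y_1$ of length $n\geq 4$ and apply \Lref{size3} to $\{y_1,y_2,y_3\}$. In the transitive case, the shortcut $y_1\to y_3$ produces a cycle of length $n-1$; minimality forces $n=4$, so $\{y_1,y_3,y_4\}$ is a $3$-cycle, but then $y_2$ has the edges $y_1\to y_2$ and $y_2\to y_3$, a mixed configuration relative to this $3$-cycle, contradicting \Pref{direction}. In the $3$-cycle case, $\{y_1,y_2,y_3\}$ is itself a $3$-cycle, and \Pref{direction} applied to $y_4$ (using $y_3\to y_4$) forces $y_1\to y_4$. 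This contradicts $y_4\to y_1$ directly when $n=4$; when $n=5$ it yields a second $3$-cycle $\{y_1,y_4,y_5\}$ sharing $y_1$ with $\{y_1,y_2,y_3\}$, contradicting stage one; when $n\geq 6$ it produces a cycle $y_1\to y_4\to\cdots\to y_n\to y_1$ of length $n-2\geq 4$, contradicting minimality.

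The principal obstacle is executing the case analysis cleanly and identifying, in each subcase, the vertex whose edge orientation relative to some $3$-cycle is mixed; that mixed configuration is the recurring input to \Pref{direction}. Once stage one is in hand, the induction in stage two proceeds mechanically via \Lref{size3} and the rigidity encoded in \Pref{direction}.
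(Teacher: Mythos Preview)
Your argument is correct, but it is organized differently from the paper and is somewhat more ambitious in scope. The paper actually splits the work into two separate propositions: the present one only treats pairs of $3$-cycles, and a subsequent proposition proves that no cycle of length $\geq 4$ exists. In the two-vertex overlap case the paper argues algebraically, using the product condition from \Lref{size3}: if $\{x_0,x_1,x_2\}$ and $\{y,x_1,x_2\}$ are both $3$-cycles then $x_0x_1x_2\in F$ and $yx_1x_2\in F$, forcing $y\in Fx_0$. In the one-vertex overlap case the paper applies \Pref{direction} to manufacture a two-vertex overlap and then invokes the previous case. For long cycles, rather than your minimum-length induction with a \Lref{size3} dichotomy, the paper picks the maximal index $i$ with $x_i\to x_1$, observes that $\{x_1,x_i,x_{i+1}\}$ is a $3$-cycle, and (when $i\geq 3$) uses \Pref{direction} and a companion minimal index $j$ to produce a second $3$-cycle $\{x_1,x_j,x_{j+1}\}$ sharing $x_1$, contradicting what was just shown. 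Your approach is more uniform, running everything through \Pref{direction} and avoiding the product-in-$F$ trick; the paper's two-vertex argument is a bit sharper algebraically, and its extremal-index argument for long cycles is slightly more direct than your case split, but both routes are equally valid.
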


\begin{proof}
First assume that
\begin{equation*}
\xymatrix@R=12pt@C=18pt{
{x_0}\ar@{<-}|(0.5){}[r]\ar@{->}|(0.5){}[d] & {x_1}\ar@{->}|(0.5){}[d]\\
{x_2}\ar@{->}|(0.5){}[ru] &  y\ar@{->}|(0.5){}[l] & {}
}
\end{equation*}
Then $y x_1 x_2 \in F$ whereas $x_0 x_1 x_2 \in F$, which means that $y \in F x_0$, and that contradicts the linear independence.

Assume that
\begin{equation*}
\xymatrix@R=12pt@C=18pt{
{x_0}\ar@{<-}|(0.5){}[r]\ar@{->}|(0.5){}[d] & {x_1}\ar@{<-}|(0.5){}[r]\ar@{->}|(0.5){}[d] & y_1\\
{x_2}\ar@{->}|(0.5){}[ru] &  y_2\ar@{->}|(0.5){}[ru] & {}
}
\end{equation*}
From Proposition \ref{direction} we have $\xymatrix@C=20px@R=20px{x_0 \ar@{->}[r]  & y_2}$ and $\xymatrix@C=20px@R=20px{y_1 \ar@{->}[r]  & x_0}$.
But then
\begin{equation*}
\xymatrix@R=12pt@C=18pt{
{x_0}\ar@{<-}|(0.5){}[r]\ar@{->}|(0.5){}[d] & {x_1}\ar@{->}|(0.5){}[d]\\
{y_1}\ar@{->}|(0.5){}[ru] &  y_2\ar@{->}|(0.5){}[l] & {}
}
\end{equation*}
and we saw already that this is impossible.
\end{proof}

\begin{prop}
There are no cycles of length greater than 3.
\end{prop}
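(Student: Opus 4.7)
The plan is to proceed by strong induction on the length $n\geq 4$ of a hypothetical cycle
\[ x_0 \to x_1 \to \cdots \to x_{n-1} \to x_0 \]
in $(B,E_B)$. The two engines are \Lref{size3}, which forces any triangle in $B$ to be either transitive or a $3$-cycle, and \Pref{direction}, which rigidifies every other vertex once a $3$-cycle is exhibited.

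I would begin by examining the triangle on $\set{x_0,x_1,x_2}$. Since $x_0\to x_1$ and $x_1\to x_2$, \Lref{size3} leaves only two possibilities for the remaining edge. In the transitive case $x_0\to x_2$, the shortcut produces a shorter cycle $x_0\to x_2\to x_3\to\cdots\to x_{n-1}\to x_0$ of length $n-1$, which for $n\geq 5$ contradicts the inductive hypothesis. In the $3$-cycle case $x_2\to x_0$, \Pref{direction} applied to $x_3$ forces all three edges from $\set{x_0,x_1,x_2}$ to $x_3$ to be oriented in the same direction; since $x_2\to x_3$ is given by the $n$-cycle, this yields $x_0\to x_3$ and $x_1\to x_3$. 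The shortcut $x_0\to x_3$ then produces a cycle of length $n-2$, which for $n\geq 6$ contradicts the inductive hypothesis.

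Three boundary cases require separate treatment. For $n=4$ with the transitive shortcut, $\set{x_0,x_2,x_3}$ is itself a $3$-cycle, and \Pref{direction} forces $x_1$'s edges to it to be uniform, incompatible with both $x_0\to x_1$ and $x_1\to x_2$. For $n=4$ with the $3$-cycle branch, the derived $x_0\to x_3$ coexists with the cycle edge $x_3\to x_0$, which is impossible since $\rho\neq\rho^{-1}$. For $n=5$ with the $3$-cycle branch, the shortcut is itself a $3$-cycle $\set{x_0,x_3,x_4}$; applying \Pref{direction} to it forces $x_1$'s edges to be uniform, so $x_0\to x_1$ demands $x_3\to x_1$, contradicting the earlier deduction $x_1\to x_3$.

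The main obstacle I expect is bookkeeping rather than algebra: each appeal to \Pref{direction} must be to a genuine $3$-cycle inside $B$ together with a vertex truly outside it, and each shortcut must lie in the same $\rho$-commuting set so that the inductive hypothesis applies. Beyond this accounting, no new algebraic computation is required.
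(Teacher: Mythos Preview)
Your argument is correct. The induction is sound, the two shortcut constructions really do produce simple directed cycles inside $(B,E_B)$, and your three boundary cases ($n=4$ transitive, $n=4$ cyclic, $n=5$ cyclic) each terminate in a genuine contradiction using only \Lref{size3} and \Pref{direction}.

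The paper, however, argues without induction. Given a cycle $x_r \to x_{r-1} \to \cdots \to x_2 \to x_1 \to x_r$ with $r \geq 4$, it picks the maximal index $i$ with $x_i \to x_1$; then $\{x_1,x_i,x_{i+1}\}$ is automatically a $3$-cycle. If $i\geq 3$, \Pref{direction} applied to $x_{i-1}$ forces $x_1 \to x_{i-1}$ (so $i \geq 4$), and a dual minimal-index choice produces a second $3$-cycle $\{x_1,x_j,x_{j+1}\}$ with $j+1\leq i-1$; the two $3$-cycles share $x_1$, contradicting the preceding proposition that cycles are vertex-disjoint. The case $i=2$ is dispatched separately by \Pref{direction}. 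So the paper leans on the vertex-disjointness result you never invoke, trading your induction and case analysis for a single extremal argument. Your route is slightly longer but more self-contained, needing only \Lref{size3} and \Pref{direction}; the paper's route is terser but depends on one more piece of the build-up.
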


\begin{proof}
Assume
\begin{equation*}
\xymatrix@R=12pt@C=18pt{
{x_1}\ar@{<-}|(0.5){}[r]\ar@{->}|(0.5){}[drr] & {x_2}\ar@{<-}|(0.5){}[r] & \dots\ar@{<-}|(0.5){}[r]& x_{r-1}\\
& &{x_r}\ar@{->}|(0.5){}[ru] & {}
}
\end{equation*}
for some $r \geq 4$.
Let $i$ be the maximal integer between $1$ and $r$ such that $\xymatrix@C=20px@R=20px{x_i \ar@{->}[r]  & x_1}$.
Now, $\xymatrix@C=20px@R=20px{x_1 \ar@{->}[r]  & x_{i+1}}$.
Therefore
\begin{equation*}
\xymatrix@R=12pt@C=18pt{
{x_1}\ar@{<-}|(0.5){}[r]\ar@{->}|(0.5){}[d] & {x_i}\\
{x_{i+1}}\ar@{->}|(0.5){}[ru] & {}
}
\end{equation*}
If $i \geq 3$ then according to Proposition \ref{direction}, $\xymatrix@C=20px@R=20px{x_1 \ar@{->}[r]  & x_{i-1}}$, which implies that $i \neq 3$, or in other words $i \geq 4$. Let $j$ be the minimal index for which $\xymatrix@C=20px@R=20px{x_1 \ar@{->}[r]  & x_{j+1}}$. In particular $\xymatrix@C=20px@R=20px{x_j \ar@{->}[r]  & x_1}$. Now, $j+1 \leq i-1$, which means that
\begin{equation*}
\xymatrix@R=12pt@C=18pt{
{x_{i+1}}\ar@{<-}|(0.5){}[r]\ar@{->}|(0.5){}[d] & {x_1}\ar@{<-}|(0.5){}[r]\ar@{->}|(0.5){}[d] & x_j\\
{x_i}\ar@{->}|(0.5){}[ru] &  x_{j+1}\ar@{->}|(0.5){}[ru] & {}
}
\end{equation*}
But this is impossible.
If $i=2$ then according to Proposition \ref{direction}, $\xymatrix@C=20px@R=20px{x_4 \ar@{->}[r]  & x_1}$ which contradicts the maximality of $i$.
\end{proof}

As a consequence we obtain the following theorem:
\begin{thm}
A $\rho$-commuting subset $B$ of $\mathcal{X}$ spans a Kummer space if and only if the graph $(B,E_B)$ satisfies the following axioms:
\begin{enumerate}
\item For every two distinct elements $x,y \in B$, either $\xymatrix@C=20px@R=20px{ x\ar@{->}[r]  & y}$ or $\xymatrix@C=20px@R=20px{ x\ar@{<-}[r]  & y}$
\item All cycles are of length 3.
\item The product of all the elements in a cycle is in $F$.
\item The cycles are vertex-disjoint.
\end{enumerate}
\end{thm}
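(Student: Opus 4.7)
The statement is a biconditional, so I will split it into necessity and sufficiency. The sufficiency argument will be organized around \cite[Corollary 2.2]{ChapVish1}, which reduces the Kummer property of a $\rho$-commuting set to that of each of its three-element subsets. This collapses the global statement to the local classification carried out in \Lref{size3}.

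For the necessity direction, every axiom is already implicit in the results accumulated in this section. Axiom 1 is contained in the opening observation of the proof of \Lref{size3}: two commuting Kummer elements in a Kummer space would force $y \in Fx$, contradicting the linear independence demanded of $B$, so between any two distinct vertices of $B$ there is exactly one directed edge. Axiom 3 is obtained by reading the second case of \Lref{size3} against any $3$-cycle of $(B,E_B)$. Axioms 2 and 4 are, respectively, the two preceding propositions (no cycles of length greater than $3$, and vertex-disjointness of cycles).

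For the sufficiency direction, I pick any distinct $x,y,z \in B$ and examine the induced subgraph on $\{x,y,z\}$. By Axiom 1 this is a tournament on three vertices, and every such tournament is combinatorially either transitive or a directed $3$-cycle. The transitive case matches, up to relabeling, the first configuration of \Lref{size3}, which yields a Kummer span with no further condition required. In the $3$-cycle case, the induced cycle is itself a cycle of the full graph $(B,E_B)$ (since the induced subgraph carries all edges of $(B,E_B)$ between these three vertices), so Axiom 3 supplies $xyz \in F$, which is precisely the extra hypothesis in the second configuration of \Lref{size3}. Either way, $Fx+Fy+Fz$ is Kummer, and invoking \cite[Corollary 2.2]{ChapVish1} finishes the argument.

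I do not anticipate a real obstacle; once the reduction to size-three subsets via \cite{ChapVish1} is in place, the combinatorics of size-$3$ tournaments makes the argument essentially automatic. A mild point worth flagging is that Axioms 2 and 4 play no role in the sufficiency step, being consequences of the Kummer condition rather than independent constraints on which sufficiency leans; they belong to the characterization because they are forced by necessity and sharpen the structural description of admissible graphs.
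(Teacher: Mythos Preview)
Your proposal is correct and follows essentially the same approach as the paper: necessity is assembled from \Lref{size3} and the two preceding propositions, while sufficiency is obtained by checking three-element subsets via \Lref{size3} and then invoking \cite[Corollary 2.2]{ChapVish1}. Your write-up is simply more explicit than the paper's two-line proof, and your side remark that Axioms~2 and~4 are not used in the sufficiency direction is accurate and worth keeping.
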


\begin{proof}
The straight-forward direction is an immediate result of what we did so far.
The opposite direction is a result of the fact that every three elements in this set span a Kummer space according to Lemma \ref{size3}.
\end{proof}

\begin{cor}
Given a $\rho$-commuting set $B$ spanning a Kummer space, if $\#B=m$ then the longest path
$\xymatrix@C=20px@R=20px{ x_1\ar@{->}[r]  & x_2\ar@{->}[r] & \dots\ar@{->}[r] & x_r}$ in the graph $(B,E_B)$ satisfying $\xymatrix@C=20px@R=20px{ x_i\ar@{->}[r]  & x_j}$ for any $1 \leq i <j \leq r$ is of length no less than $m-\lfloor \frac{m}{3} \rfloor$.
\end{cor}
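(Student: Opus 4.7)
The plan is to exhibit a subset $S \subseteq B$ whose induced subgraph is a transitive subtournament and whose size satisfies the bound. First I would invoke the structural theorem just proved: the graph $(B,E_B)$ is a tournament (axiom 1) and its only directed cycles are vertex-disjoint 3-cycles (axioms 2 and 4). Let $t$ denote the number of these 3-cycles; vertex-disjointness gives $3t \leq m$ and hence $t \leq \lfloor m/3 \rfloor$.

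Next I would form $S$ by deleting one vertex from each 3-cycle, so that $\lvert S \rvert = m-t \geq m - \lfloor m/3 \rfloor$. The induced subgraph on $S$ is again a tournament (axiom 1 is inherited by induced subgraphs). Any directed cycle in this induced subtournament would also be a cycle in $(B,E_B)$, hence one of the original 3-cycles; but from each such 3-cycle we removed a vertex, so no 3-cycle survives in $S$. Therefore the induced subtournament on $S$ is acyclic.

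Finally I would use the classical fact that an acyclic tournament is transitive: its vertices admit a total order $x_1, x_2, \ldots, x_r$ (with $r=\lvert S\rvert$) for which $x_i \to x_j$ whenever $i<j$. This produces exactly the kind of path demanded by the corollary, of length $r \geq m - \lfloor m/3 \rfloor$.

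There is no real obstacle here; the structural theorem carries essentially all the weight. The only points to be careful about are that deleting vertices cannot create new directed cycles, and that vertex-disjointness of the 3-cycles is what makes a single deletion per cycle suffice to destroy them all simultaneously — both of which follow immediately from the theorem's axioms.
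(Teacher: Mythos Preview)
Your argument is correct and follows exactly the paper's approach: remove one vertex from each of the vertex-disjoint 3-cycles (at most $\lfloor m/3 \rfloor$ removals) and observe that what remains is an acyclic tournament, hence transitive. The paper's proof states this more tersely, leaving the acyclic-tournament-implies-transitive step implicit, whereas you spell it out.
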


\begin{proof}
Take $B$ and take off exactly one element from each cycle.
The number of elements taken off is at most $\lfloor \frac{m}{3} \rfloor$, and what is left satisfies the required condition.
\end{proof}

\begin{cor}
The maximal $\rho$-commuting set spanning a Kummer space in $A$ is of cardinality $3 n+1$.
\end{cor}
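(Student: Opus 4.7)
The plan is to prove the two inequalities separately, using the previous corollary for the hard direction.

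For the lower bound $\geq 3n+1$, the Kummer space $V_n$ constructed in Section~\ref{maxSec} has dimension $3n+1$ and admits a basis of monomials. Since any two monomials $\rho$-commute, this basis is a $\rho$-commuting set of cardinality $3n+1$ spanning a Kummer space.

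For the upper bound, suppose $B$ is $\rho$-commuting and spans a Kummer space, with $\#B=m$. By the previous corollary, $B$ contains a chain $x_1,\dots,x_r$ with $x_ix_j=\rho x_jx_i$ for all $i<j$ and $r\geq m-\lfloor m/3\rfloor$. A quick check over the residue of $m$ modulo $3$ shows that once $r\leq 2n+1$ is established, the bound $m\leq 3n+1$ follows automatically, so the whole argument reduces to this chain bound.

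To prove $r\leq 2n+1$, I would pass to the quotient group $\mul{A}/\mul{F}$. Since $x_i^3\in F$, the images $\bar x_i$ generate an abelian subgroup $H$ killed by $3$, hence an $\mathbb{F}_3$-vector space, and the commutator $(\bar a,\bar b)\mapsto aba^{-1}b^{-1}$, taking values in $\{1,\rho,\rho^2\}\cong\mathbb{F}_3$, is a well-defined alternating $\mathbb{F}_3$-bilinear form $\omega$ on $H$. On the spanning system $\bar x_1,\dots,\bar x_r$ the Gram matrix of $\omega$ is the $r\times r$ alternating matrix $M$ with $+1$ above and $-1$ below the diagonal; a direct analysis of its kernel shows that its rank is $r$ for $r$ even and $r-1$ for $r$ odd, and this equals $\operatorname{rank}(\omega|_H)$ regardless of any $\mathbb{F}_3$-linear dependencies among the $\bar x_i$.

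The main obstacle is the symplectic input $\operatorname{rank}(\omega|_H)\leq 2n$. To obtain it I would lift a symplectic basis of $H/\operatorname{Rad}(\omega|_H)$ to pairs $(y_i,z_i)_{i=1}^{d}$ in $\mul{A}$ satisfying $y_iz_i=\rho z_iy_i$ and with trivial commutators across distinct pairs. Each pair generates a cyclic subalgebra of $A$ isomorphic to $(y_i^3,z_i^3)_{3,F}$, and since distinct pairs centralize one another, together they generate an $F$-subalgebra of $A$ isomorphic to $\bigotimes_{i=1}^{d}(y_i^3,z_i^3)_{3,F}$, of $F$-dimension $9^d$. Comparison with $\dim_F A=9^n$ forces $d\leq n$, so $\operatorname{rank}(\omega|_H)=2d\leq 2n$. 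Combined with the rank calculation for $M$, this yields $r\leq 2n+1$ and the conclusion $m\leq 3n+1$, matching the lower bound.
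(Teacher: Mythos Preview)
Your proof is correct and follows the same overall strategy as the paper: both use the previous corollary to extract a long chain inside $B$, and both argue that a chain of length $r$ forces a tensor product of roughly $r/2$ cyclic degree-$3$ algebras to embed in $A$, contradicting $\dim_F A = 9^n$ once $r \geq 2n+2$. The difference lies only in how that embedding is produced. The paper writes down explicit generators: from a chain $x_1,\dots,x_{2n+2}$ it forms the pairs $(x_1,x_2)$, $(x_1x_2^{-1}x_3,\,x_1x_2^{-1}x_4)$, $\dots$, verifies by hand that distinct pairs commute, and concludes that these generate a tensor product of $n+1$ cyclic algebras inside $A$. You instead pass to the $\mathbb{F}_3$-symplectic space $H\subset A^\times/F^\times$, compute the rank of the commutator form via its Gram matrix on the chain, and then invoke an abstract symplectic basis to manufacture the commuting hyperbolic pairs. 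Your route is more conceptual and makes the linear-algebra content explicit (the rank computation for $M$ and the observation that $\operatorname{rank} M=\operatorname{rank}(\omega|_H)$ regardless of dependencies among the $\bar x_i$ are both clean); the paper's route is shorter and entirely hands-on. Underneath, the paper's explicit pairs are exactly a symplectic Gram--Schmidt on the chain, so the two arguments are the same construction viewed concretely versus abstractly.
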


\begin{proof}
We are already familiar with monomial Kummer spaces of size $3 n+1$.
According to the previous corollary, if we have a $\rho$-commuting set $B$ of size $3 n+2$ spanning a Kummer space then we have a path in $(B,E_B)$
$$\xymatrix@C=20px@R=20px{ x_1\ar@{->}[r]  & x_2\ar@{->}[r] & \dots\ar@{->}[r] & x_{2 n+2}}$$ satisfying $\xymatrix@C=20px@R=20px{ x_i\ar@{->}[r]  & x_j}$
for any $1 \leq i <j \leq 2 n+2$. Then the set $B$ generates over $F$ a tensor product of $n+1$ cyclic algebras of degree 3
$$F[x_1,x_2] \otimes F[x_1 x_2^{-1} x_3,x_1 x_2^{-1} x_4] \otimes \dots \otimes F[(\prod_{k=1}^n x_{2 k-1} x_{2 k}^{-1}) x_{2 n+1},(\prod_{k=1}^n x_{2 k-1} x_{2 k}^{-1}) x_{2 n+2}],$$
contradiction.
\end{proof}

\section{The Generic Tensor Product of Cyclic Algebras}\label{generic}

Let $K$ be a purely transcendental field extension of $F$ generated by $\set{\alpha_k,\beta_k : 1 \leq k \leq n}$.
Fix $A=\bigotimes_{k=1}^n K[x_k,y_k : x_k^p=\alpha_k, y_k^p=\beta_k, y_k x_k=\rho x_k y_k]$.

\begin{thm}
For any Kummer subspace of $A$ there exists a monomial Kummer space of the same dimension.
\end{thm}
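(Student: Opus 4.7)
The plan is to equip $A$ with a value function coming from the purely transcendental structure of $K$ over $F$, and to pass from a Kummer subspace to a monomial one by taking leading terms. Let $\Gamma = \Z^{2n}$ (with lexicographic order) have basis $e_{1,x},\dots,e_{n,x},e_{1,y},\dots,e_{n,y}$, and let $v\colon K^\times \to \Gamma$ be the unique valuation on $K = F(\alpha_1,\beta_1,\ldots,\alpha_n,\beta_n)$ with $v(\alpha_k) = p\,e_{k,x}$ and $v(\beta_k) = p\,e_{k,y}$. Extend $v$ to a gauge $w$ on $A$ by declaring $w(\prod_k x_k^{i_k} y_k^{j_k}) = \sum_k (i_k e_{k,x} + j_k e_{k,y})$ on the standard monomials (those with $0 \le i_k, j_k \le p-1$) and $w(\sum_m c_m m) = \min_m(v(c_m) + w(m))$ on $K$-linear combinations. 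Since $v(K^\times) = p\Gamma$ while the gauges of distinct standard monomials lie in distinct cosets of $p\Gamma$, the minimum defining $w(z)$ is attained by a single monomial for every nonzero $z \in A$, giving a well-defined leading term $\ell(z) = c\,m$ with $c \in K^\times$ and $m$ a standard monomial.

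Given a Kummer subspace $V$ of $K$-dimension $d$, apply Gaussian elimination (ordering monomials by gauge) to obtain a basis $w_1,\ldots,w_d$ of $V$ whose leading terms $\ell_i := \ell(w_i) = c_i m_i$ involve pairwise distinct standard monomials $m_1,\ldots,m_d$; then $W := K m_1 + \cdots + K m_d$ is a monomial $K$-subspace of dimension $d$, so it suffices to prove $W$ is Kummer. By the $*$-product criterion recalled in the introduction, fix $(r_1,\dots,r_d)$ with $\sum r_i = p$; since $V$ is Kummer, $w_1^{r_1} * \cdots * w_d^{r_d} \in K$. Writing each $w_i = \ell_i + (w_i - \ell_i)$ and expanding, this $*$-product decomposes as $\ell_1^{r_1} * \cdots * \ell_d^{r_d} + Q$, where every term of $Q$ has gauge strictly greater than $\sum_i r_i\,w(\ell_i)$.

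A direct computation in each cyclic factor, using $y_k x_k = \rho x_k y_k$, $x_k^p = \alpha_k$ and $y_k^p = \beta_k$, shows that $\ell_1^{r_1} * \cdots * \ell_d^{r_d} = c_1^{r_1}\cdots c_d^{r_d} \cdot (m_1^{r_1} * \cdots * m_d^{r_d})$, and that all orderings in the monomial $*$-product yield identical total exponents, so the product reduces to $\lambda\,\alpha^s \beta^t \tilde{m}$ for some $\lambda \in F$, multi-indices $s,t$, and a single normal-form monomial $\tilde{m}$. If $\lambda = 0$ then $m_1^{r_1} * \cdots * m_d^{r_d} = 0 \in K$; otherwise, matching leading terms in $\ell_1^{r_1} * \cdots * \ell_d^{r_d} + Q = w_1^{r_1} * \cdots * w_d^{r_d} \in K$ forces $\tilde{m} = 1$ (since $Q$ has strictly greater gauge and an element of $K$ has trivial monomial part), so $m_1^{r_1} * \cdots * m_d^{r_d} = \lambda\,\alpha^s \beta^t \in K$. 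Hence $W$ is Kummer. The main obstacle will be the gauge-theoretic bookkeeping --- checking multiplicativity of $w$ on leading terms, strict growth of gauges in the remainder $Q$, and uniqueness of the leading monomial --- which reduces to the totally ramified coset calculation built into the definition of $v$.
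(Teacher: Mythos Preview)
Your argument is correct and follows essentially the same route as the paper: impose an ordering so that every nonzero element of $A$ has a well-defined leading monomial, use Gaussian elimination to make the leading monomials of a basis of $V$ distinct, and then observe that for each $*$-product the leading monomial of $w_1^{r_1}*\cdots*w_d^{r_d}\in K$ is either zero or equals $m_1^{r_1}*\cdots*m_d^{r_d}$ up to a central factor, forcing the latter to lie in $K$. The only difference is packaging: the paper clears denominators and works in the polynomial ring $F[x_1,y_1,\dots,x_n,y_n]$ with the lexicographic order (so that $\alpha_k=x_k^{p}$, $\beta_k=y_k^{p}$ are absorbed into the exponents), whereas you phrase the same ordering as a rank-$2n$ valuation $v$ on $K$ extended to a multiplicative gauge $w$ on $A$; the coset observation $v(K^\times)=p\Gamma$ is exactly what replaces the paper's passage to unbounded exponents.
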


\begin{proof}
Write $V=F v_1+F v_2+\dots+F v_m$, where $n$ is the dimension of $V$.
Each $v_k$ is a sum of monomials of the form $c x_1^{a_1} y_1^{b_1} \dots x_n^{a_n} y_n^{b_n}$ where the coefficient $c$ is a quotient of two polynomials in the variables $\alpha_1,\beta_1,\dots,\alpha_n,\beta_n$ over $F$.
By multiplying by all the denominators, we can assume that the coefficients are polynomials.
Then each $v_k$ can be written also as a polynomial in the variables $x_1,y_1,\dots,x_n,y_n$ over $F$.
We now impose a lexicographical valuation on the polynomials in $F[x_1,y_1,\dots,x_n,y_n]$.
In particular, every polynomial has now a leading monomial, i.e. the monomial $x_1^{a_1} y_1^{b_1} \dots x_n^{a_n} y_n^{b_n}$ of the highest value with a nonzero coefficient.

By the following process we can make sure the leading monomials of $v_1,\dots,v_m$ are distinct and linearly independent over $K$:
If the leading monomial of $v_1$ is $x_1^{a_1} y_1^{b_1} \dots x_n^{a_n} y_n^{b_n}$ then we take the coefficient $c$ of $x_1^{a_1'} y_1^{b_1'} \dots x_n^{a_n'} y_n^{b_n'}$ in $v_1$ when writing $v_1$ as a polynomial in $(F[\alpha_1,\beta_1,\dots,\alpha_n,\beta_n])[x_1,y_1,\dots,x_n,y_n]$ where $a_k',b_k'$ are the unique integers between 0 and $p-1$ satisfying $a_k' \equiv a_k ,b_k' \equiv b_k \pmod{p}$. Then for each $2 \leq i \leq m$ we replace each $v_i$ with $c v_i-c_i v_1$ where $c_i$ is the coefficient of that monomial in $v_i$.
Then we fix $v_2$ and change $v_3,\dots,v_m$ similarly, and so on.

Let $w_k$ be the leading monomial of $v_k$.
For any set of nonnegative integers $d_1,\dots,d_m$ satisfying $d_1+\dots+d_m=p$, The expression $w_1^{d_1} * \dots * w_m^{d_m}$ is either equal to the leading monomial of $v_1^{d_1} *\dots* v_m^{d_m}$ or to zero. In both cases, it is in $F$, which means that $F w_1+\dots+F w_m$ is monomial Kummer.
\end{proof}

\begin{cor}
If $p=3$ then the upper bound for the dimension of a Kummer subspace of $A$ is $3 n+1$.
\end{cor}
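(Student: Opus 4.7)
The plan is to combine the just-proved theorem (every Kummer subspace of $A$ has a monomial Kummer subspace of the same dimension) with the classification results of Section~\ref{threesec} (applicable because $p=3$).

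First, suppose $V$ is a Kummer subspace of $A$ of dimension $m$. By the preceding theorem, there exists a monomial Kummer subspace $W \subseteq A$ with $\dim W = m$. Choose a basis of $W$ consisting of monomials $w_1,\dots,w_m$; as noted in Section~\ref{threesec} just before \Lref{size3}, any set of monomials is $\rho$-commuting. Thus $\{w_1,\dots,w_m\}$ is a $\rho$-commuting set spanning a Kummer space.

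Next, apply the corollary (at the end of Section~\ref{threesec}) stating that the maximal $\rho$-commuting set spanning a Kummer space in a tensor product of $n$ cyclic algebras of degree $3$ has cardinality at most $3n+1$. This immediately yields $m \le 3n+1$.

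Finally, to see the bound is sharp, invoke the family $V_n$ constructed in Section~\ref{maxSec}: specializing that construction (with $p=3$) to $A$ produces a Kummer subspace of dimension $pn+1 = 3n+1$, so the bound is attained. The main step that does the real work is the reduction from arbitrary Kummer subspaces to monomial ones, which is precisely the content of the theorem just proved; once that is in hand, the remainder is a direct citation of the graph-theoretic upper bound for monomial (hence $\rho$-commuting) Kummer spaces, so there is no substantive obstacle beyond verifying that the hypotheses of the earlier corollary are satisfied.
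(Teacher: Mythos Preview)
Your proposal is correct and follows essentially the same approach as the paper's proof, which simply states that the result ``follows immediately from the previous theorem and Section~\ref{threesec}.'' You have merely spelled out the two ingredients explicitly---the reduction to monomial Kummer spaces via the preceding theorem, and the $3n+1$ bound for $\rho$-commuting sets from Section~\ref{threesec}---and added the (implicit) sharpness observation via the spaces $V_n$ of Section~\ref{maxSec}.
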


\begin{proof}
Follows immediately from the previous theorem and Section \ref{threesec}.
\end{proof}

\section*{Acknowledgements}
I owe thanks to Jean-Pierre Tignol and Uzi Vishne for their help and support.

\section*{Bibliography}
\bibliographystyle{amsalpha}
\bibliography{bibfile}
\end{document}